\documentclass{jonasart}

\title{Central products of Cayley-Dickson loops}

\authors{Adam Chapman and Ilan Levin}

\authorinfo[A. Chapman]{Academic College of Tel-Aviv-Yaffo, Israel}{adam1chapman@yahoo.com}

\authorinfo[I. Levin]{Bar-Ilan University, Israel}{ilan7362@gmail.com}

\abstract{This paper studies the triviality of commutators in central products of Cayley-Dickson loops. Two immediate outcomes of this study are (1) the construction of a sequence of non-commutative di-associative loops in which the probability that a random commutator is trivial approaches 1, and (2) an easy proof that if two central products of $n$-fold Cayley-Dickson loops are isomorphic for $n\geqslant 3$, then the loops in the first product are term-wise isomorphic to the loops in the second product.}

\keywords{Di-Associative Loops; Cayley-Dickson Algebras; Commutativity Degree; Central Product}

\msc{20N05 (primary); 17D99 (secondary)}

\acknowledgments{We thank the anonymous referees whose detailed reports have improved the quality of this paper considerably.}

\VOLUME{1}
\ISSUE{1}
\NUMBER{7}
\DOI{https://doi.org/10.46298/jonas.17802}
\editinfo{March 24, 2026}{June 23, 2026}{Leandro Vendramin}

\begin{document}
	\section{Cayley-Dickson Algebras}

	Given a~field $F$ of $\operatorname{char}(F)\neq 2$ and $\gamma \in F^\times$, a~Cayley-Dickson doubling $B=A\{\gamma\}$ of an algebra $A$ over $F$ with involution $\sigma$ is defined to be $A\oplus A\ell$ where multiplication is given by $(q+r\ell)(s+t\ell)=qs+\gamma\sigma(t)r+(tq+r\sigma(s))\ell$. The involution extends from $A$ to $B$ by $\sigma(q+r\ell)=\sigma(q)-r\ell$. We define the $n$-fold Cayley-Dickson algebra $(\gamma_1,\dots,\gamma_n )_F$ recursively to be $B=A\{\gamma_n\}$ where $A=(\gamma_1,\dots,\gamma_{n-1})_F$, and the initial 0-fold algebra is $F$ with the trivial involution (see~\cite{Schafer:1954}). It is known (see~\cite{Schafer:1954} and~\cite[Section 33.C]{BOI}) that:
	\begin{itemize}
		\item when $n=1$, $(\alpha)_F$ are quadratic \'etale algebras, i.e., either $F \times F$ or quadratic field extensions $K$ of $F$, with the involution being the nontrivial automorphism of the Galois group of $K/F$;
		\item when $n=2$, $(\alpha,\beta)_F$ are quaternion algebras over $F$, with the involution being the symplectic involution;
		\item when $n=3$, $(\alpha,\beta,\gamma)_F$ are octonion algebras;
		\item when $n=4$, $(\alpha,\beta,\gamma,\delta)_F$ are sedenion algebras, etc.
	\end{itemize}
	These algebras, especially for $n\leqslant 3$, are important for various reasons. Tensor products of these algebras, especially for $n=2$ (see~\cite{Merkurjev:1981}), but also for higher $n$ (for instance in~\cite{BlacharRowenVishne:2023}) are also objects of interest in the active research of mathematics. In~\cite{MPP}, isomorphisms between tensor products of octonion algebras were studied. The norm form of $(\gamma_1,\dots,\gamma_n )_F$ given by $x\mapsto x\sigma(x)$ is the $n$-fold Pfister form $\langle \! \langle \gamma_1,\dots,\gamma_n\rangle\!\rangle$ (see~\cite{Musgrave:2015}).

	\section{Cayley-Dickson Loops}

	Recall that a~set $L$ with a~binary operation is called a~\textbf{quasi-group} if the maps $\ell_a : x \mapsto ax$, $r_a : x \mapsto xa$, are bijections, for all $a \in L$. A quasi-group with an identity element $1$ is called a~\textbf{loop}. A loop can be thought of as a~non-associative group. A subloop is a~subset which forms a~loop under the restricted binary operation. In particular, 
    \[
        Z(L)= \left\{z \in L : xz = zx, (xy)z = x(yz), (xz)y = x(zy), (zx)y = z(xy)\right\}
    \]
    is a~subloop. A subloop $N \leqslant L$ is called normal if it is a~kernel of some homomorphism of loops $f: L \to M$. Equivalently, $N$ is normal if and only if $\varphi(N) = N$ for any inner automorphism $\varphi \in \operatorname{Inn}(L)$. Inner automorphisms are the subgroup of $\operatorname{Aut}(L)$ generated by $\operatorname{Inn}(L):= \langle \ell^{-1}_x r_x, r^{-1}_{xy}r_y r_x, \ell^{-1}_{yx}\ell_x\ell_y \, : \, x,y \in L \rangle$. It is easy to see that $\varphi(z)=z$ for any $z \in Z(L)$, hence any subgroup of $Z(L)$ is a~normal subloop of $L$. See~\cite{Bruck:1958} for further background on loops in general.

	For the construction of the loops of concern in this paper, first note that the recursive construction of $(\gamma_1,\dots,\gamma_n )_F$ defines a~set $\ell_1,\dots,\ell_n$ of generators of this algebra over $F$. Fixing a~subgroup $Z$ of $F^\times$ including $\gamma_1,\dots,\gamma_n$, we define the $n$-fold Cayley-Dickson loop $(\gamma_1,\dots,\gamma_n )_Z$ over $Z$ to be the multiplicative subloop of $(\gamma_1,\dots,\gamma_n )_F$ generated by $Z$ and $\ell_1,\dots,\ell_n$ (see~\cite{Kirshtein:2014}). Note that it always contains $-1$. For example, if $Z=\{\pm 1\}$ and $F$ is any field of characteristic $0$, then $(-1,-1)_Z$ is the famous quaternion group $Q_8$, and $(-1,-1,-1)_Z$ is the octonion loop $O_{16}$ (see~\cite{Kirshtein:2012}). We define the central product $L_1 \ast L_2$ of two loops $L_1$ and $L_2$ with center $Z$ in the same manner as in groups, i.e., $L_1 \times L_2$ modulo $\{(x,x^{-1})|x\in Z\}$. The notation is taken from~\cite{Gorenstein:1979}, see also~\cite[Chapter 2, Section 5]{Gorenstein:1980}. As with groups, the center $Z$ can be identified with $\{(z,1): z\in Z\}$ in $L_1 \ast L_2$, and this is then the center of $L_1 \ast L_2$. As with groups (see for example in~\cite{Gorenstein:1979}, this notion extends without any problem to central products of any finite number of loops sharing the same center $Z$. Note that the central product $L_1 \ast \dots \ast L_m$ of Cayley-Dickson loops $L_i =(\gamma_{i,1},\dots,\gamma_{i,n})_Z$, $i \in \{1,\dots,m\}$, is also the multiplicative subloop of the corresponding tensor product $A_1\otimes_F \dots \otimes_F A_m$ over $F$ of the Cayley-Dickson algebras $A_i =(\gamma_{i,1},\dots,\gamma_{i,n})_F$ generated by the generators of these algebras and $Z$. All the loops mentioned above are di-associative (that is, every two elements generate an associative subgroup, see~\cite[Theorem 5.4]{Culbert:2007}), so they are ``close'' to being groups in some sense.

	The goal of this paper is to examine how differently central products of loops behave from groups. We focus on Cayley-Dickson loops, but this calls for a~deeper study of larger families of such loops. For the group of quaternions $Q_8 =(-1,-1)_Z$ with $Z=\{\pm 1\}$, we have $Q_8 \ast Q_8 =(1,-1)_Z \ast (1,-1)_Z$, even though $Q_8 \not \cong (1,-1)_{Z}$. We show that this does not extend to higher-fold Cayley-Dickson loops. Another important difference is the commutativity degree: For non-abelian groups, it cannot exceed $\frac{5}{8}$. We show here that for central products of Cayley-Dickson loops, the commutativity degree can approach 1. More precisely, we prove the following statements:
	\begin{enumerate}
		\item The probability that a~random commutator in $L_1 \ast L_2$ is trivial approaches 1 as $n$ approaches $\infty$ when $L_1$ and $L_2$ are $n$-fold Cayley-Dickson loops over $Z$ of $|Z|<\infty$.
		\item If $D_1,\dots,D_m,E_1,\dots,E_m$ are $n$-fold Cayley-Dickson loops over $Z$ such that $n \geqslant 3$ and $D_1\ast \dots \ast D_m \cong E_1 \ast \dots \ast E_m$, then there exists $\sigma \in S_m$ such that $D_k \cong E_{\sigma(k)}$ for all $k \in \{1,\dots,m\}$.
	\end{enumerate}
	See~\cite{Levin2025AssociativeMoufang} for a~similar discussion on the associativity degree of Moufang loops. Item (2) is related to questions about automorphisms of Cayley-Dickson loops, as were studied in~\cite{Kirshtein:2012} and~\cite{Chapman2024LoopsInvolutionCayleyDickson}, and serves as a~loop analogue (at least for $n = 3$) of~\cite{MPP}. We also study the associativity degree of finite $n$-fold Cayley-Dickson Loops $(-1,\dots,-1)_Z$.

	\section{Commutants and Commutators}

	Given a~loop $L$ and $x \in L$, define the commutant of $x$ as $C_L (x):= \{y\in L|xy=yx\}$. The commutator $[x,y]$ of $x,y \in L$ is the unique element such that $(xy) = [x,y](yx)$, whose existence and uniqueness follow from the definition of a~loop. Note that in our case, where all the loops are central products of Cayley-Dickson loops, all the commutators are in $\{\pm 1\}$.

	Let $D_1,\dots,D_m$ be $n$-fold Cayley-Dickson loops over $Z$. Note that each $D_i$ possesses a~surjective homomorphism from $D_i$ to $(\mathbb{Z}/2\mathbb{Z})^n$ sending the $k$-th generator of $D_i$ to $e_k$ for all $k = 1, \dots, n$, and the elements of $Z$ to $\vec{0}$. The kernel is thus $Z$. Hence, $D_i /Z\cong (\mathbb{Z}/2\mathbb{Z})^n$. For $A=D_1 \ast \dots \ast D_m$, we obtain in a~similar way an isomorphism $A/Z \cong (\mathbb{Z}/2\mathbb{Z})^{mn}$. The finiteness of $A/Z$ as a~set enables us to carry out the following calculations.

	Note that for any positive integer $d$, $G=(\mathbb{Z}/2\mathbb{Z})^d$ is both an additive group and an $\mathbb{F}_2$-vector space, and every subgroup of $G$ is thus an $\mathbb{F}_2$-vector space as well which has a~well-defined dimension. We apply this notion in the proofs below.

	\begin{proposition}\label{Cardinality}
		Let $D_1,\dots,D_m$ be $n$-fold Cayley-Dickson loops over $Z$ and let $x \in D_k \setminus Z$ for some $k \in \{1,\dots,m\}$. Set $A=D_1 \ast \dots \ast D_m$. Then, $C_A (x)/Z$ is of cardinality $2^{(m-1)n+1}=\frac{1}{2^{n-1}}2^{mn}=\frac{1}{2^{n-1}}\cdot|A/Z|$.
	\end{proposition}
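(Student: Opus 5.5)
Every element of $A$ has the form $z\,y_1y_2\cdots y_m$ with $z\in Z$ and $y_j\in D_j$. Elements coming from different factors commute, and the central factor $z$ commutes with everything. So for $x\in D_k$ the question $xy=yx$ reduces to whether $x$ commutes with the component $y_k$. The argument therefore splits into a count inside one Cayley-Dickson loop and a count over the remaining factors.

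\textbf{Step 1: reduce to $D_k$.} First we show that for $y=y_1\cdots y_m$ we have $xy=yx$ iff $xy_k=y_kx$. In the tensor-product model, $y$ is the simple tensor $y_1\otimes\cdots\otimes y_m$ (up to a central scalar) and $x=1\otimes\cdots\otimes x\otimes\cdots\otimes 1$. Hence $xy$ and $yx$ differ exactly by the commutator $[x,y_k]\in\{\pm1\}$ in the $k$-th slot. Since $-1\neq 1$ (as $\operatorname{char}F\neq 2$), $xy=yx$ iff $[x,y_k]=1$. Commutation depends only on cosets modulo $Z$, so the image satisfies
\[
C_A(x)/Z\;\cong\;\Bigl(\prod_{j\neq k}D_j/Z\Bigr)\times C_{D_k}(x)/Z .
\]
This has cardinality $2^{(m-1)n}\cdot|C_{D_k}(x)/Z|$. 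It therefore suffices to prove $|C_{D_k}(x)/Z|=2$.

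\textbf{Step 2: commutants inside one loop.} Let $D=D_k=(\gamma_1,\dots,\gamma_n)_Z$. Its elements modulo $Z$ are the standard basis monomials $e_S$, indexed by subsets $S\subseteq\{1,\dots,n\}$; the product of two monomials is $\pm$ (a product of $\gamma$'s) times a monomial. We claim that a non-scalar basis monomial $x$ commutes with a basis monomial $y$ iff $y\in Z\cup Zx$. The two coset classes $Z$ and $Zx$ do commute with $x$, and they are distinct since $x\notin Z$, giving exactly $2$.

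For the converse, use that $D$ is the set of units of scalar multiples of standard basis vectors. Every element $u\notin F$ in a Cayley-Dickson algebra satisfies $\sigma(u)=-u$ on the pure part. For two pure basis vectors $u,v$ that are not proportional, one has $uv=-vu$. This is the standard fact that distinct pure basis elements of the standard basis anticommute, since $uv+vu$ is the polar of the norm form and the standard basis is orthogonal for the norm form $\langle\!\langle\gamma_1,\dots,\gamma_n\rangle\!\rangle$.

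\textbf{Main obstacle.} The substantive point is the anticommutation of distinct non-scalar basis monomials. I would prove it via the identity
\[
uv+vu=u\sigma(\cdot)\text{-polarization}: \qquad uv+vu=-(u\sigma(v)+v\sigma(u))
\]
for pure $u,v$. The right-hand side is $-2$ times the bilinear norm form $b(u,v)$, which vanishes because the standard basis is orthogonal. Orthogonality itself comes by induction on the doubling $A\oplus A\ell$, using the formula $N(q+r\ell)=N(q)-\gamma N(r)$. One must also use that distinct monomials in $D$ correspond to non-proportional basis vectors and that $\operatorname{char}F\neq2$. Granting this, $|C_D(x)/Z|=2$, and the total is $2^{(m-1)n+1}$ as claimed.
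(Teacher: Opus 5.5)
Your proposal is correct and follows essentially the same route as the paper. The definition of the central product reduces the count to the single factor $D_k$, with the other factors contributing $2^{(m-1)n}$ cosets, and $C_{D_k}(x)/Z$ has exactly two elements. The paper takes that last fact as known, whereas you prove it through the anticommutation of distinct non-identity standard basis monomials, which are pure and orthogonal for the norm form.
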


	\begin{proof}
		This readily follows from the fact that $C_{D_k}(x)/Z$ is of dimension $1$, $D_k /Z$ is of dimension $n$ and the definition of the central product.
	\end{proof}

	We say that an element $y \in A$ is of rank $k$ if:
	\begin{enumerate}
		\item $y = x_{i_1}\cdots x_{i_k}$;
		\item $x_{i_j} \in D_{i_j}\setminus Z$ for all $j = 1,\dots,k$ and;
		\item $i_1 < i_2 < \dots < i_k$.
	\end{enumerate}
	We denote it by $\operatorname{rank}_A (y) = k$. If $y \in Z$, then $\operatorname{rank}_A (y):= 0$. Note that this is well-defined, as we can omit brackets and change the order of the $x_i$'s as we wish, because all their associators and commutators vanish.

	\begin{theorem}\label{recurrence}
		Let $D_1,\dots,D_m$ be $n$-fold Cayley-Dickson loops over $Z$, let $y \in A$, and let $\operatorname{rank}_A (y) = k$. Then $\frac{|C_A (y)/Z|}{2^{mn}}=\frac{1}{2}+\frac{1}{2}(2p-1)^k$, where $p=\frac{1}{2^{n-1}}$.
	\end{theorem}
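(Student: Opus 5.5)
Since every commutator in $A$ lies in $\{\pm 1\}$ and only depends on cosets modulo $Z$, I will view $[\cdot,\cdot]$ as a map $A/Z \times A/Z \to \{\pm1\}$. Write $y = x_{i_1}\cdots x_{i_k}$ with $x_{i_j} \in D_{i_j}\setminus Z$, and an arbitrary $w \in A$ as $w = w_1\cdots w_m$ with $w_i \in D_i$, each determined modulo $Z$. The first step is a multiplicativity formula: because distinct factors $D_i$, $D_j$ commute and associate elementwise in the central product, and $Z$ is central,
\[
[y,w] = \prod_{j=1}^{k} [x_{i_j}, w_{i_j}].
\]
This reduces to computing $yw$ and $wy$ by regrouping factor-by-factor, which the remark after the definition of rank allows. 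It also uses di-associativity inside each $D_{i_j}$, since only the two elements $x_{i_j}, w_{i_j}$ of one Cayley-Dickson loop are involved there.

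The second step is counting. Under the identification $A/Z \cong \prod_i D_i/Z$, choosing $wZ$ uniformly amounts to choosing each $w_iZ \in D_i/Z$ independently and uniformly. The factors with $i \notin \{i_1,\dots,i_k\}$ are irrelevant. For each $j$, the proof of Proposition~\ref{Cardinality} shows that $C_{D_{i_j}}(x_{i_j})/Z$ has dimension $1$ in the $n$-dimensional space $D_{i_j}/Z$. Hence the event $[x_{i_j},w_{i_j}]=1$ has probability exactly $p = 2/2^n = \frac{1}{2^{n-1}}$, and otherwise the commutator is $-1$. Thus the signs $\epsilon_j = [x_{i_j},w_{i_j}]$ are independent random variables with $\mathbb{E}[\epsilon_j] = p - (1-p) = 2p-1$.

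The third step is to take expectations:
\[
\frac{|C_A(y)/Z|}{2^{mn}} = \Pr\Bigl[\prod_j \epsilon_j = 1\Bigr] = \frac{1}{2} + \frac{1}{2}\,\mathbb{E}\Bigl[\prod_j \epsilon_j\Bigr] = \frac{1}{2} + \frac{1}{2}(2p-1)^k.
\]
This also covers $k=0$, where the value is $1$. Equivalently, one can run the recurrence the theorem's label suggests: $q_k = p\,q_{k-1} + (1-p)(1-q_{k-1})$ with $q_0 = 1$, solved by induction.

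The main obstacle is the first step. I must justify rigorously that the commutator in the nonassociative central product splits as a product over components. I will do this by writing $yw$ and $wy$ inside the tensor product algebra $A_1\otimes\cdots\otimes A_m$, where the product is computed componentwise. Then $yw = \bigotimes_i x_i w_i$ and $wy = \bigotimes_i w_i x_i$, and $x_iw_i = \pm w_ix_i$ in each Cayley-Dickson algebra. This sidesteps any bracket manipulation in the loop.
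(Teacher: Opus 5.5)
Your proof is correct and rests on the same key fact as the paper's proof: the commutator with $y$ splits as a product of the component commutators $[x_{i_j},w_{i_j}]$, which are independent and each equal to $1$ with probability $p$. The paper finishes by induction on $k$ via the recurrence $b_k=(1-p)+(2p-1)b_{k-1}$, whereas you finish in one step with $\Pr[\prod_j\epsilon_j=1]=\frac12+\frac12\prod_j\mathbb{E}[\epsilon_j]$; your tensor-product justification of the splitting and your explicit independence argument via $A/Z\cong\prod_i D_i/Z$ also make rigorous two points the paper uses tacitly.
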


	\begin{proof}
		Set $b_k =\frac{1}{2^{mn}}|C_A (y)/Z|$ for any $y \in A$ of $\operatorname{rank}_A (y) = k$

		Let us prove by induction on $k$ that $b_k =\frac{1}{2}+\frac{1}{2}(2p-1)^k$. The base case is covered by Proposition~\ref{Cardinality}. Assume it is true for $k-1$. Write $y = x_{i_1}\cdots x_{i_k}$. By the definition of the central product, for any $\tilde{a} \in A$, the commutator $[y,\tilde{a}]$ is the product of the different commutators $[x_{i_1},\tilde{a}],\dots,[x_{i_{k-1}},\tilde{a}],[x_{i_k},\tilde{a}]$. As a~result, we obtain the identity
		\[
            [y,\tilde{a}] 
            = [x_{i_1}\cdots x_{i_{k-1}}, \tilde{a}] \cdot [x_{i_k},\tilde{a}].
        \]
        Therefore, $\tilde{a} \in C_A (y)$ if and only if 
        \[
            [x_{i_1}\cdots x_{i_{k-1}},\tilde{a}] 
            = [x_{i_k},\tilde{a}] = 1
            \quad \text{or} \quad
            [x_{i_1}\cdots x_{i_{k-1}},\tilde{a}] 
            = [x_{i_k},\tilde{a}] = -1.
        \]
        The probability of the former is $pb_{k-1}$ and the probability of the latter is $(1-p)(1-b_{k-1})$. So $b_k = pb_{k-1}+(1-p)(1-b_{k-1}) = (1-p)+(2p-1)b_{k-1}$, and by plugging the induction assumption $b_{k-1} = \frac{1}{2}+\frac{1}{2}(2p-1)^{k-1}$ we are done.
	\end{proof}

	When $|Z|<\infty$, one can see that
	\begin{equation}\label{rank}
		\tag{$\clubsuit$} |\left\{x \in A \, : \, \operatorname{rank}_A (x)=k\right\}| = |Z|\cdot \binom{m}{k}(2^{n}-1)^k.
	\end{equation}

	\begin{corollary}
		Let $(x,y)$ be an element chosen from $L\times L$ with uniform distribution, where $L=D_1 \ast D_2$ is the central product of two $n$-fold Cayley-Dickson loops over $Z$ with $|Z| < \infty$. Then $\mathbb{P}_c (L) = \mathbb{P}(xy=yx)=1-6\cdot\frac{1}{2^n}+22\cdot\frac{1}{4^n}-24\cdot\frac{1}{8^n}+8\cdot \frac{1}{16^n}$.
	\end{corollary}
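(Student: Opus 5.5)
Condition on $x$ and average over $y$. For fixed $x$, the probability that a uniformly chosen $y\in L$ commutes with $x$ is $|C_L(x)|/|L| = |C_L(x)/Z|/|L/Z|$. This holds because $C_L(x)$ is a union of cosets of $Z$: elements of $Z$ commute with everything, and multiplying $y$ by $z\in Z$ does not change $[x,y]$. By Theorem~\ref{recurrence} with $m=2$, this ratio depends only on $k=\operatorname{rank}_L(x)$ and equals $b_k=\frac12+\frac12(2p-1)^k$ with $p=2^{1-n}$. So
\[
\mathbb{P}_c(L)=\sum_{k=0}^{2}\frac{|\{x\in L:\operatorname{rank}_L(x)=k\}|}{|L|}\, b_k .
\]

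Next, compute the weights from \eqref{rank}. With $|L|=|Z|\cdot 2^{2n}$, the ranks $0,1,2$ have weights $1/4^n$, $2(2^n-1)/4^n$ and $(2^n-1)^2/4^n$; these sum to $1$, which is a useful check. Put $q=2^{-n}$, so $p=2q$ and $2p-1=4q-1$. Then
\[
b_0=1,\qquad b_1=2q,\qquad b_2=\tfrac12+\tfrac12(4q-1)^2=1-4q+8q^2 .
\]
In terms of $q$, the weights are $q^2$, $2q(1-q)$ and $(1-q)^2$. Hence
\[
\mathbb{P}_c(L)=q^2+2q(1-q)\cdot 2q+(1-q)^2(1-4q+8q^2).
\]
Expanding gives $(1-q)^2(1-4q+8q^2)=1-6q+17q^2-20q^3+8q^4$. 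Adding $q^2+4q^2-4q^3$ yields $1-6q+22q^2-24q^3+8q^4$, which is the claimed formula.

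The only step needing care is justifying the reduction to cosets of $Z$ and the applicability of the theorem for $k=0$. For $k=0$ we have $b_0=1$, since $C_L(x)=L$ when $x\in Z$. The formula also gives $\frac12+\frac12=1$ there, so the case is consistent. The rest is routine algebra.
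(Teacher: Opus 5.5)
Your proof is correct and follows essentially the same route as the paper: the law of total probability over $\operatorname{rank}_L(x)\in\{0,1,2\}$, with weights from \eqref{rank} and conditional probabilities $b_k$ from Theorem~\ref{recurrence}, followed by simplification. Your substitution $q=2^{-n}$ makes the final expansion explicit, and it checks out; your remark that $C_L(x)$ is a union of $Z$-cosets justifies a step the paper leaves implicit.
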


	\begin{proof}
		Note that $|L| = |Z|\cdot2^{2n}$. By the law of total probability with respect to $\operatorname{rank}_A (x)$, we have
		\[
			\mathbb{P}(xy=yx) = \sum_{i=0}^{2} \mathbb{P}(xy=yx | \operatorname{rank}_A (x)=i) \cdot \mathbb{P}(\operatorname{rank}_A (x)=i).
		\]
		Moreover, using~\eqref{rank} for $m=2$, the distribution of $\operatorname{rank}_A (x)$ is given by
		\[
			\mathbb{P}(\operatorname{rank}_A (x)=i)=\frac{|Z|}{|A|}\binom{2}{i}(2^n -1)^i.
		\]
		Substituting these values yields
		\begin{gather*}
			\mathbb{P}(xy=yx) = \frac{|Z|}{|A|}(1+p \cdot2\cdot (2^n -1)+(\frac{1}{2}+\frac{1}{2}(2p-1)^2 )\cdot (2^n -1)^2 )= \\
			= 1-6\cdot\frac{1}{2^n}+22\cdot\frac{1}{4^n}-24\cdot\frac{1}{8^n}+8\cdot \frac{1}{16^n}
		\qedhere
		\end{gather*}
	\end{proof}

	\begin{corollary}
		There exists a~sequence of di-associative loops $\left\{\mathcal{K}_n\right\}_{n=1}^{\infty}$ such that the commutativity degree approaches 1, i.e., $\lim_{n\rightarrow \infty}\mathbb{P}_c (\mathcal{K}_n )=~1$.
	\end{corollary}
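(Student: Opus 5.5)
The natural choice is $\mathcal{K}_n = D_1 \ast D_2$, where $D_1=D_2=(-1,\dots,-1)_Z$ is the $n$-fold Cayley-Dickson loop over $Z=\{\pm 1\}$, say inside $(-1,\dots,-1)_{\mathbb{Q}}$. Any pair of $n$-fold Cayley-Dickson loops over a finite $Z$ containing the parameters would do equally well. The plan is to check three things: that $\mathcal{K}_n$ is di-associative, that it is non-commutative (so the statement is not vacuous), and that its commutativity degree tends to $1$.

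\textbf{Di-associativity.} As recalled in Section~2, the central product of Cayley-Dickson loops is the multiplicative subloop of the tensor product $A_1\otimes_F A_2$ generated by $Z$ and the generators. It is di-associative by the cited result~\cite[Theorem 5.4]{Culbert:2007}. I would simply invoke this.

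\textbf{Non-commutativity.} Take $x=\ell_1\in D_1$ and $y=\ell_2\in D_1$. Then $xy=-yx\neq yx$ for $n\geqslant 2$. For $n=1$ the loop is commutative, so I would either start the sequence at $n=2$ or note that a finite number of terms does not affect the limit.

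\textbf{The limit.} By the preceding corollary,
\[
\mathbb{P}_c(\mathcal{K}_n)=1-6\cdot 2^{-n}+22\cdot 4^{-n}-24\cdot 8^{-n}+8\cdot 16^{-n}.
\]
Every term except the constant $1$ tends to $0$ as $n\to\infty$, so the limit is $1$.

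The only real point to be careful about is that the previous corollary requires $|Z|<\infty$. This holds here because $Z=\{\pm1\}$ contains all the parameters $\gamma_i=-1$. With that checked, the argument is routine; the main "obstacle" is just making sure the example is genuinely non-commutative for every $n$ in the sequence.
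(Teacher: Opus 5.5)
Your proposal is correct and is essentially the paper's proof: take $\mathcal{K}_n=D_1\ast D_2$ for two $n$-fold Cayley-Dickson loops over a finite $Z$, read off $\mathbb{P}_c(\mathcal{K}_n)$ from the preceding corollary, and let $n\to\infty$. Your extra checks (finiteness of $Z$, di-associativity via the cited result, and non-commutativity for $n\geqslant 2$ via $\ell_1\ell_2=-\ell_2\ell_1$) are sound and make explicit what the paper leaves implicit.
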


	\begin{proof}
		Take $\mathcal{K}_n$ to be as in the previous corollary and $n \rightarrow \infty$.
	\end{proof}

	This demonstrates how versatile di-associative loops are, even rather natural ones, compared to groups in terms of their commutativity degree.

	\begin{remark}
		If we consider $D_1,\dots,D_m$ to be $n$-fold Cayley-Dickson loops over $Z$ and $A_m = D_1 \ast \cdots \ast D_m$, fix $n \geqslant 2$, and then we get $\lim_{m\rightarrow \infty}\mathbb{P}_c (A_m ) = \frac{1}{2}$. This was known for $n=2$, in which case these loops are quaternionic groups.
	\end{remark}

	We now turn our concern to understanding automorphisms of these objects, when $n \geqslant 3$.

	\begin{theorem}
		If $D_1,\dots,D_m,E_1,\dots,E_m$ are $n$-fold Cayley-Dickson loops over $Z$ with $n \geqslant 3$ and $D_1 \ast \dots \ast D_m \cong E_1 \ast \dots \ast E_m$, then there exists $\sigma \in S_m$ for which $D_j \cong E_{\sigma(j)}$.
	\end{theorem}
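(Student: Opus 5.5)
The plan is to recover each factor $D_j$ intrinsically from the loop $A=D_1\ast\dots\ast D_m$, using only its loop structure. Concretely, I would characterize the subloops $D_j$ (or rather their images in $A/Z$) by a property invariant under isomorphism. Then an isomorphism $\varphi\colon A\to B=E_1\ast\dots\ast E_m$ must carry each $D_j$ onto some $E_{\sigma(j)}$.

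\textbf{Step 1: the centers correspond.} Since $\varphi$ maps $Z(A)=Z$ onto $Z(B)=Z$, it induces an isomorphism of $\mathbb{F}_2$-vector spaces $\bar\varphi\colon A/Z\to B/Z$. It also preserves commutators, which take values in $\{\pm1\}$, so $\varphi(-1)=-1$. Hence $\varphi$ preserves the rank-dependent sizes of commutants.

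\textbf{Step 2: detect rank one.} By Theorem~\ref{recurrence}, the quantity $b_k=\frac12+\frac12(2p-1)^k$ with $p=2^{1-n}$ depends only on $k$. For $n\geqslant 2$ we have $2p-1\le 0$; for $n\geqslant3$ we have $|2p-1|<1$ and the sign alternates. Even ranks then give values above $\frac12$ that decrease with $k$, and odd ranks give values below $\frac12$ that increase with $k$. Since $|2p-1|^k$ is strictly decreasing in $k$, $k\mapsto b_k$ is injective. So rank is an isomorphism invariant: $\operatorname{rank}_B(\varphi(y))=\operatorname{rank}_A(y)$. In particular $\varphi$ maps the rank-one elements $\bigcup_j (D_j\setminus Z)$ onto $\bigcup_j(E_j\setminus Z)$.

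Note that for $n=2$ we get $2p-1=0$, so every nonzero rank has $b_k=\frac12$. This is exactly where the $Q_8\ast Q_8$ counterexample lives, and it is why $n\ge3$ is needed.

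\textbf{Step 3: group rank-one elements by factor.} Two rank-one elements $x,x'$ lie in the same factor $D_j$ iff their product $xx'$ has rank $\le1$. If they lie in different factors, $xx'$ has rank $2$. Since $\varphi$ preserves rank and products, the relation ``same factor'' is preserved. This yields a bijection $\sigma$ with $\varphi(D_j\setminus Z)=E_{\sigma(j)}\setminus Z$. Together with $\varphi(Z)=Z$, this gives $\varphi(D_j)=E_{\sigma(j)}$, so $\varphi$ restricts to an isomorphism $D_j\cong E_{\sigma(j)}$.

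The main obstacle is Step 2, namely making sure the commutant counts genuinely separate rank $1$ from all higher ranks. This requires the strict monotonicity of $|2p-1|^k$, i.e.\ $0<|2p-1|<1$, which holds precisely when $n\geqslant3$. I would also double-check that Theorem~\ref{recurrence} covers $k=0$ (where $b_0=1$) consistently, and that the isomorphism preserves $|C(y)/Z|$. The latter holds because $\varphi(C_A(y))=C_B(\varphi(y))$ and $\varphi(Z)=Z$.
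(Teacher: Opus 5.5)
Your proof is correct, and its core is the same as the paper's. Since $\varphi(Z)=Z$, commutant sizes modulo $Z$ are preserved, and since $|b_k-\frac12|=\frac12|2p-1|^k$ is strictly decreasing for $n\geqslant 3$, rank is an isomorphism invariant.

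The only real difference is how the rank-one elements are sorted into factors.

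The paper argues via commutation. For $x\in D_j\setminus Z$, the rank-one elements that fail to commute with $x$ are exactly $D_j\setminus Z\langle x\rangle$. So $\varphi$ carries this set onto $E_\ell\setminus Z\langle\varphi(x)\rangle$, and together with $\varphi(Z\langle x\rangle)=Z\langle\varphi(x)\rangle\subseteq E_\ell$ this gives $\varphi(D_j)=E_\ell$. The paper's sentence has to be read as restricted to rank-one elements: in $A$ itself, $yw$ with $y\in D_j\setminus Z\langle x\rangle$ and $w\in D_{j'}\setminus Z$, $j'\neq j$, also anticommutes with $x$.

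Your criterion is that $xx'$ has rank at most $1$ exactly when $x,x'$ lie in the same factor. It uses only rank preservation and multiplicativity of $\varphi$. It needs no information about commutation inside a single factor, and it yields the classes $D_j\setminus Z$ directly, so the bijection $\sigma$ falls out at once.

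Either route works; yours is marginally cleaner. The observation $\varphi(-1)=-1$ in your Step 1 is true but not needed.
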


	\begin{proof}
		Let $\varphi: D_1 \ast \dots \ast D_m \rightarrow E_1 \ast \dots \ast E_m$ be an isomorphism. Given $b_k$ as in Theorem~\ref{recurrence}, $|b_k -\frac{1}{2}| = \frac{1}{2}|2p-1|^{k}$ is strictly decreasing, since $p = \frac{1}{2^{n-1}}$ and $n \geqslant 3$. Hence, $\left\{b_k\right\}_{k=0}^{\infty}$ has no repetitions. Thus, $\varphi$ preserves the rank. Let $x \in D_j$, $\operatorname{rank}_D (x)=1$, for some $j \in \{1,\dots, m\}$. Then $\varphi(x) \in E_\ell$, for some $\ell$, because its rank is $1$. The only elements anti-commuting with $x$ are $D_j\setminus Z\langle x \rangle$, and the only elements anti-commuting with $\varphi(x)$ are $E_\ell \setminus Z\langle \varphi(x) \rangle$, so $\varphi(D_j\setminus Z\langle x \rangle) = E_\ell \setminus Z\langle \varphi(x) \rangle$, and altogether we get that $\varphi(D_j )=E_\ell$. So there exists $\sigma \in S_m$ for which $D_j \cong E_{\sigma(j)}$.
	\end{proof}

	\section{Associativity Degree}

	In~\cite{Levin2025AssociativeMoufang} it was shown that the associativity degree of a~finite non-associative Moufang loop is bounded from above by $\frac{43}{64}$. Here we study the associativity degree of the $n$-fold Cayley Dickson loops $L_n =(-1,\dots,-1)_Z$, for $|Z| < \infty$. It is important to note that the order of an $n$-fold Cayley-Dickson loop is $2^n |Z|$. In this context, we define the associativity degree of $L_n$, denoted by $\mathbb{P}_{\mathcal{A}}(L_n )$, to be the probability that for a~random triple of elements $(x,y,z)$ chosen from $L_n \times L_n \times L_n$ with uniform distribution, $\langle x,y,z \rangle $ is a~group. This coincides with the definition in~\cite{Levin2025AssociativeMoufang} for Moufang loops, where this is equivalent to saying that $(xy)z=x(yz)$ because of Moufang's theorem.

	\begin{theorem}
		The associativity degree of $L_n$ is $\mathbb{P}_\mathcal{A}(L_n ) = \frac{7\cdot 4^n -14 \cdot 2^n +8}{8^n}$.
	\end{theorem}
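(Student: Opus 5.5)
Since $L_n/Z\cong(\mathbb{Z}/2\mathbb{Z})^n$ and $Z$ is central, whether $\langle x,y,z\rangle$ is a group depends only on the images $\bar x,\bar y,\bar z\in\mathbb{F}_2^n$. So I will choose $(\bar x,\bar y,\bar z)$ uniformly from $(\mathbb{F}_2^n)^3$, which gives $8^n$ equally likely triples, and count the good ones. The subloop $\langle x,y,z\rangle$ is generated by $Z$ together with lifts of a basis of $V=\operatorname{span}(\bar x,\bar y,\bar z)$. If $\dim V\le 2$, then $\langle x,y,z\rangle$ is generated by $Z$ and at most two elements, so by di-associativity (\cite[Theorem 5.4]{Culbert:2007}) it is a group. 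If $\dim V=3$, the key claim is that $\langle x,y,z\rangle$ is a group if and only if $V$ is \emph{not} a "quaternionic" subspace in a suitable sense. I expect this characterization, and the count it leads to, to be the main work.

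The first step is to pin down the dimension-$3$ criterion. For $L_n=(-1,\dots,-1)_Z$, every element outside $Z$ squares to $-1$, and two elements of $L_n/Z$ anticommute exactly when they are distinct and both nonzero. So in a $3$-dimensional $V$ any two of $\bar x,\bar y,\bar z,\overline{xy},\dots$ that are distinct and nonzero anticommute. I will argue that $\langle x,y,z\rangle$ is either a copy of the octonion loop $O_{16}$ (times $Z$), which is non-associative, or a group. It is a group precisely when $z\in\langle x,y\rangle Z$ up to the span, which cannot happen when $\dim V=3$. This suggests that every $3$-dimensional triple is bad, and then \[ \mathbb{P}_{\mathcal A}=\frac{\#\{(\bar x,\bar y,\bar z):\dim\operatorname{span}\le 2\}}{8^n}. \] This is consistent with the target numerator. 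The number of triples with $\dim V=3$ is $(2^n-1)(2^n-2)(2^n-4)=8^n-7\cdot4^n+14\cdot2^n-8$. Subtracting from $8^n$ gives exactly $7\cdot4^n-14\cdot2^n+8$. So the formula is equivalent to the statement that linearly independent images always generate a non-associative subloop.

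The second step is to prove that statement. Given independent $\bar x,\bar y,\bar z$, I will show $(xy)z=-x(yz)$ using the Cayley-Dickson associator structure. In $(-1,\dots,-1)_F$, the basis elements $\ell_S$ ($S\subseteq\{1,\dots,n\}$) satisfy $(\ell_S\ell_T)\ell_U=\pm\ell_S(\ell_T\ell_U)$. The sign is $-1$ whenever $S,T,U$ are independent in $\mathbb{F}_2^n$ and all three of the relevant elements are imaginary. This is the known fact that any three basis elements not lying in a common quaternion subalgebra generate an octonion subalgebra, a consequence of the doubling formula. I would prove it by induction on $n$ using the doubling formula $(q+r\ell)(s+t\ell)=qs-\sigma(t)r+(tq+r\sigma(s))\ell$. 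The cases split according to how many of $x,y,z$ involve the new generator $\ell=\ell_n$.

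This induction is the main obstacle. One must check that the sign of the associator for basis elements with independent images is always $-1$, not merely $\pm1$ depending on the triple. For the $n$-fold algebra with $n\ge 4$ the algebra is not alternative, so this cannot be deduced from octonion theory alone. I would first verify the case $n=3$ directly, where $O_{16}$ is Moufang and independent imaginary units anti-associate. Then I would attempt the inductive step by reducing each of the four configurations (zero, one, two, or three factors containing $\ell$) to an associator in $(-1,\dots,-1)_F$ of one lower fold via the doubling formula. Once this is done, the counting above completes the proof.
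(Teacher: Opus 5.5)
Your reduction to $V=\operatorname{span}(\bar x,\bar y,\bar z)$ and your count match the paper. The paper counts the $\dim V\le 2$ triples directly as $4^n+2\cdot 2^n(2^n-1)+4(2^n-1)(2^n-2)$, which equals your $8^n-(2^n-1)(2^n-2)(2^n-4)$. It handles $\dim V=3$ by quoting \cite[Lemma~7]{Kirshtein:2014}.

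The gap is your second step. You plan to prove by induction that $(xy)z=-x(yz)$ for \emph{every} triple with independent images, i.e.\ that three basis elements outside a common quaternion subalgebra always generate an octonion subalgebra. This is false for $n\geq 4$. Write $(uv)w=[u,v,w]_A\,u(vw)$ for basis elements of the $(n-1)$-fold algebra $A$, and put $\ell=\ell_n$. Take basis elements $a,b,c\in A$ with $a,b$ distinct imaginary units. The doubling formula gives $(ab)(c\ell)=(c(ab))\ell$ and $a(b(c\ell))=((cb)a)\ell=-[c,b,a]_A\,(c(ab))\ell$. So whenever $c,b,a$ anti-associate in $A$, the triple $(a,b,c\ell)$ \emph{associates}. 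Concretely, in $(-1,-1,-1,-1)_F$,
\[
(\ell_1\ell_2)(\ell_3\ell_4)=\ell_1\bigl(\ell_2(\ell_3\ell_4)\bigr),
\]
although the images $e_1,e_2,e_3+e_4$ are independent. Hence the induction cannot be closed in the configuration where exactly one factor contains $\ell$. Your dichotomy ``$O_{16}$ or a group'' also fails for $n\ge 4$. The subloop $\langle\ell_1,\ell_2,\ell_3\ell_4\rangle$ is non-associative (by the argument below), but it is not $O_{16}$, since in $O_{16}$ every independent triple anti-associates.

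The theorem only needs that $\langle x,y,z\rangle$ is not a group, and this follows from the anticommutation fact you already stated. Suppose $\langle x,y,z\rangle$ were a group. Since $-1$ is a central scalar and $z$ anticommutes with both $x$ and $y$,
\[
z(xy)=(zx)y=-(xz)y=-x(zy)=x(yz)=(xy)z.
\]
But $\bar z$ and $\bar x+\bar y$ are distinct and nonzero by independence, so $z$ and $xy$ anticommute. Since $\operatorname{char}F\neq 2$, this is a contradiction. If you replace your associator-sign induction with this argument, or with the citation the paper uses, your counting completes the proof.
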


	\begin{proof}
		Let $(a,b,c)$ be an arbitrary element in $L_n \times L_n \times L_n$. It amounts to counting the number of triples generating a~subgroup. If $a \in Z$, then $b$ and $c$ can be any elements. We have $2^{2n}\cdot |Z|^3$ such triples. If $a \not \in Z$ (and there are $2^n\cdot |Z|-|Z|$ such elements), we consider two cases: $b \in \langle a, Z \rangle$ and $b \not \in \langle a, Z \rangle$. Note that $|\langle a, Z \rangle| = 2|Z|$. If $b\in \langle a, Z \rangle$, then $c$ can be any element, and there are $|Z|(2^n -1)\cdot 2|Z| \cdot 2^n |Z|$ such triples. If $b \not \in \langle a, Z \rangle$, then $c$ has to belong to $\langle a,b, Z \rangle$ by~\cite[Lemma 7]{Kirshtein:2014}, and there are $|Z|(2^n -1)\cdot |Z|(2^n -2) \cdot 4|Z|$ such triples. After summing everything and dividing by $|L_n |^3$, we finally get that 
        \[
            \mathbb{P}_\mathcal{A}(L_n)
            = \frac{7\cdot 4^n -14 \cdot 2^n +8}{8^n}.
        \qedhere
        \]
	\end{proof}

	For example, when $n=2$, $\frac{7\cdot 4^2 -14 \cdot 2^2 +8}{8^2}=\frac{64}{64}=1$, as expected, and when $n=3$, $\frac{7\cdot 4^3 -14 \cdot 2^3 +8}{8^3}=\frac{344}{512}=\frac{43}{64}$, as was already found in~\cite{Levin2025AssociativeMoufang}.

	%%% REFERENCES %%%
	
\end{document}